\def\thtext#1{
  \catcode`@=11
  \gdef\@thmcountersep{. #1}
  \catcode`@=12
}
\def\threst{
  \catcode`@=11
  \gdef\@thmcountersep{.}
  \catcode`@=12
}
\newtheorem{theorem}{Theorem}
\newtheorem{lemma}{Lemma}
\newenvironment{remark}{\trivlist \item[\hskip \labelsep{\bf Remark.}]}%
{\endtrivlist}
\newenvironment{proof}{\trivlist \item[\hskip \labelsep{\bf
Proof.}]}{\endtrivlist}
\def\R{{\Bbb R}}
\def\C{{\Bbb C}}
\def\v{\varphi}
\def\G{\Gamma}
\def\c{\circ}
\def\r{\rho}
\def\g{\gamma}
\def\tw{\operatorname{tw}}
\def\sm{\setminus}
\def\cP{{\cal P}}
\def\ss{\subset}
\def\d{\partial}
\def\s{\sigma}
\def\hG{\hat{\Gamma}}
\def\a{\alpha}
\def\.{.\spacefactor\@m}
\begin{document}

\title{The Length of a Minimal Tree With a Given Topology: generalization of Maxwell Formula}
\author{A.\,O.~Ivanov, A.\,A.~Tuzhilin}

\maketitle

  \begin{abstract}
The classic Maxwell formula calculates the length of a planar
locally minimal binary tree in terms of coordinates of its boundary
vertices and directions of incoming edges. However, if an extreme
tree with a given topology and a boundary  has degenerate edges, then the
classic Maxwell formula cannot be applied directly, to
calculate the length of the extreme tree in this case it is necessary to
know which edges are degenerate. In this paper we generalize the
Maxwell formula to arbitrary extreme trees in a Euclidean space of arbitrary
dimension. Now to calculate the length of such a tree, there is no need to
know either what edges are degenerate, or the directions of
nondegenerate boundary edges. The answer is the maximum of some
special linear function on the corresponding compact convex subset of
the Euclidean space coinciding with the intersection of some cylinders.
  \end{abstract}


\section*{Introduction}
 \markright {Introduction}

The present paper is devoted to investigation of extreme trees in
Euclidean spaces. These trees attract interest since the class of such
trees is a natural extension of the set of locally minimal trees and
shortest trees (Steiner minimal trees). The latter ones can be considered
as solutions to Transportation problem and so have grate importance for
applications. It is known, that the searching of a shortest tree spanning
a given boundary set is a very time-consuming algorithmic problem (an
$N\!P$-hard problem), that gives reason for active investigation of
heuristic solutions. As such solutions, one can choose as extreme trees, so
as minimal spanning trees. The latter ones are used often, because there
is a polynomial algorithm of their construction, and quick realizations of
it are well-known and widespread. The relative error of this heuristic in
the worst possible situation is called the Steiner ratio of the ambient
metric space, see~\cite{1}.

At present, the Steiner ratio is not known for any Euclidean spaces,
starting with the two-dimensional plane. Notice that in 60th of the
previous century, E.~N.~Gilbert and H.~O.~Pollak~\cite{1} conjectured that the
Steiner ratio of the Euclidean plane is attained at the vertex set of
a regular triangle and is equal to $\sqrt3/2$. But in spite of many
attempts of different authors (see a review in~\cite{2}), this conjecture is
not proved yet. The most known attempt was taken in 90th by D.~Z.~Du and
F.~K.~Hwang~\cite{3}. But it turns out that their proof contains serious gaps
which were pointed out as by the authors, so as by other specialists. As a
result, currently the conjecture is considered as open.

For the Euclidean spaces of dimension three and more we know even less.
It is proved in~\cite{4} that in these spaces the Steiner ratio is not achieved
at the vertex set of a regular simplex. Moreover, a fast growing low
estimate for a possible number of points in boundary set, the Steiner
ratio could be achieved at is found. Therefore, there is no any reasonable
conjecture concerning the Steiner ratio value for these spaces.

In the present paper a new formula is obtained, which gives an opportunity
to calculate the length of an extreme network spanning a given boundary
set without the network construction. It turns out that the length of such
network can be found as a maximal value of some linear function $\r$ on an
appropriate convex compact subset $S$ of the configuration space $\R^N$.
The function $\r$ depends on the coordinates of the boundary points only,
and the subset $S$ is completely defined in terms of the structure of the
parameterizing tree of the extreme network. It seems to us, that the
formula obtained gives a new view onto relations between the lengths of
distinct extreme trees, in particular, the lengths of shortest trees and
minimal spanning trees.

The authors like to use the opportunity to express their gratitude to
academician A.~T.~Fomenko for his permanent attention to their work.

The work is partly supported by RFBR (project N~10--01--00748), President Program ``Leading Scientific Schools of RF''
(project NSh--3224.2010.1), Euler program of DAAD, and also by Federal
Programs RNP~2.1.1.3704, FCP~02.740.11.5213 and FCP~14.740.11.0794.

\section{Preliminaries}
 \markright {Preliminaries}

Consider an arbitrary tree $G=(V,E)$ with the vertex set $V=\{v_k\}$
containing a fixed subset $B=\{v_1,\ldots,v_n\}$, and with the edge set
$E$. We assume that all the vertices of the tree $G$ having degree $1$ or
$2$ lie in $B$. Such a set $B$ is called a {\em boundary of the tree $G$}
and is denoted by $\d G$. Vertices from $\d G$ and edges incident to such
vertices are said to be {\em boundary}, and the remaining vertices
$v_{n+1},\ldots,v_{n+s}$ and the remaining edges are said to be {\em
interior}.  A {\em network of type $G$} in the Euclidean space $\R^m$ is
an arbitrary mapping $\G\:V\to\R^m$. The restriction of $\G$ onto $\d G$
is called the {\em boundary of the network $\G$} and is denoted by $\d\G$.

Each network $\G$ is useful to represent as a {\em linear graph},
associating each edge $v_kv_l$ of the tree $G$ with the segment
$\bigl[\G(v_k),\G(v_l)\bigr]$ (which could be degenerate). This segment is
called by {\em edge of the network $\G$}. Thereby, an {\em angle\/}
between adjacent non-degenerate edges of the network is naturally defined.
The {\em length of the network\/} is also naturally defined as the sum of
the lengths of all its edges. A network, all whose edges are
non-degenerate, is called {\em non-degenerate}. Besides, an edge of a
network is called {\em boundary\/} ({\em interior\/}), if such is the
corresponding edge of the tree $G$. An edge of the tree $G$ is said to be
{\em $\G$-degenerate\/} ({\em $\G$-non-degenerate\/}), if such is the
corresponding edge of the network $\G$.

Let $\a=\{G_k\}$ be a family of non-intersecting subtrees of $G$.  Define
the {\em reduced\/} tree $G/\a$ as follows.  Its vertices are all the
$G_k$ together with the vertices from $G$, which are not in $G_k$. Join
the vertices from $G/\a$ by an edge, if and only if they are joined in $G$
by an edge.  The {\em boundary\/} of the tree $G/\a$ consists of all its
vertices containing elements from $\d G$.

For each network $\G$, define the {\em $\G$-reduction\/} of the tree $G$
taking the $G_k$ to be equal to the connected components of the subgraph
of $G$ generated by $\G$-degenerate edges of the tree $G$. Notice that the
network $\G$ generates naturally the non-degenerate network of the type
$G/\a$, which is denoted by $\hG$ and called {\em reduced}.

Let $\v\:B\to\R^m$ be a mapping, which is one-to-one with its image.
Consider all possible networks with the boundary $\v$. Then a network
having the least possible length among all such networks is called a {\em
Steiner minimal tree\/} or a {\em shortest network\/} with the given
boundary.

A network $\G$ is said to be {\em locally minimal}, if $\d\G$ is
one-to-one with its image, and the angles between adjacent edges of the
reduced network $\hG$ are at least $120^\c$.  Notice that the vertex
degrees of the network $\hG$ can be equal to $1$, $2$, or $3$, and all its
vertices of degree $1$ and $2$ are boundary. A locally minimal network
whose set of boundary vertices coincides with the set of its vertices of
degree $1$ is called {\em binary}.

Each shortest network is locally minimal. Locally minimal networks, in
turn, are shortest ``in small'', i.e., any sufficiently small part of such
network is a shortest network with the corresponding boundary. Full
information concerning locally minimal networks can be found in~\cite{2}
and~\cite{5}.

For an arbitrary network $\G$ of type $G$ we put $z_k=\G(v_k)\in\R^m$. Let
us identify the boundary of the network $\G$ with the point
$z=(z_1,\ldots,z_n)\in\R^{mn}$ of the configuration space $\R^{mn}$.

By $\langle \cdot,\cdot\rangle$ we denote the Euclidean scalar product in
$\R^{mn}$.  To start with, let $n=2$, $z_1\ne z_2$, and  $\nu$ be the
direction of the segment $[z_1,z_2]$ oriented to $z_2$ (correspondent,
$-\nu$ is the direction of this segment oriented to $z_1$). In the
configuration space we define the vector $\theta=(-\nu,\nu)$. That is, the
vector $\theta$ is composed from the directions of the segment $[z_1,z_2]$
oriented to its consecutive vertices.  Then
 $$
\langle z,\theta\rangle =\langle z_1,-\nu\rangle +\langle z_2,\nu\rangle =
\Bigl\langle z_2-z_1,\dfrac{z_2-z_1}{\|z_2-z_1\|}\Bigr\rangle =\|z_2-z_1\|,
 $$
i.e. $\langle z,\theta\rangle$ is equal to the length of the segment
$[z_1,z_2]$.

Now, let $\G$ be a non-degenerate locally minimal binary network with a
boundary $\v$.  By $\r(\G)$ we denote the length of the network $\G$.  Let
$\nu_k$ be the direction of the boundary edge of the network $\G$ entering
into the point $z_k$, $k\le n$.  Let us define the {\em vector $\theta$ of
the boundary edges directions of the network $\G$} as follows:
$\theta=(\nu_1,\ldots,\nu_n)$. Summing the above expressions for the
length of a segment and taking into account the additivity of the scalar
product and the fact, that the sum of the direction vectors of the edges
entering into a vertex of degree $3$ is equal to zero, we have:
 $$
\r(\G)=\langle z,\theta\rangle .
 $$
The latter equality is referred as {\em Maxwell formula} (see, for
example, \cite{1}, \cite{2}, \cite{6}). Maxwell formula can be naturally generalized to
locally minimal networks of general form. In this case the corresponding
reduced network can contain boundary vertices of degree $2$ or $3$, where
the edges meet by angles of at least $120^\c$. In Maxwell formula for such
networks, at each boundary vertex we need to take the sum of vectors of all
the entering edges.

Notice that for non-degenerate locally minimal binary networks in the
plane, the vector $\theta$ can be calculated by the direction of a single
edge and by the planar structure of the network. It is convenient to do
this in terms of so-called twisting numbers. Let us recall the
corresponding definition. Let the standard orientation of the plane be
fixed, that gives us a possibility to define the positive (left) and
negative (right) rotations. Let $e_1$ and $e_2$ be two edges of a planar
immersed binary tree $G'$, and $\g$ be the unique path in $G'$ joining
them. Then the {\em twisting number $\tw(e_1,e_2)$ from the edge $e_1$ to
the edge $e_2$} is defined as the difference between the numbers of left
(positive) and right (negative) turns in the interior vertices of the path
$\g$ during the motion from $e_1$ to $e_2$ in $G$. Notice that this
function is skew-symmetric and additive along the paths (see~\cite{2} or~\cite{5}).

Let us identify the plane $\R^2$ with the complex field $\C$:  the points
$z_k$ are considered as complex numbers, and the directions $\nu$ are
considered as unit complex numbers $e^{i \psi}$.  Then the configuration
space can be identified with $\C^n$. Instead of the Euclidean scalar
product, here we consider the Hermitian scalar product which is denoted
in the same way.

Let $e_k$ be the unique edge of the network $\G$ entering into the point
$z_k$, $k\le n$, and $\theta=(e^{i\psi_1},\ldots,e^{i\psi_n})$ be the
directions vector of the edges $e_k$. Then the number $\langle
z,\theta\rangle$ is real, and it is equal to the length $\r(\G)$ of the
network $\G$.  Indeed, it suffices to verify this equality for a network
having a single edge $[z_1,z_2]$, $z_1\ne z_2$.  Let $e^{i\psi}$ be
the direction of the segment $[z_1,z_2]$ oriented from $z_1$ to $z_2$
(in this case $-e^{i\psi}$ is the direction of this segment oriented to
the point $z_1$).  Then $\theta=(-e^{i\psi},e^{i\psi})$, and hence
 $$
  \langle z,\theta\rangle =z_1(-e^{-i\psi})+z_2e^{-i\psi}=
  (z_2-z_1)e^{-i\psi} = (z_2-z_1)(\overline{z_2-z_1})/|z_2-z_1|= |z_2-z_1|.
 $$

Let $t_{pq}=\tw(e_p,e_q)$ be the twisting number from the edge $e_p$ to
the edge $e_q$. Then
 $$
e^{i\psi_q}=-e^{i\psi_p}e^{i\frac\pi3 t_{pq}}.
 $$
Put $t_k=\bigl(e^{i\frac\pi3 t_{k1}},\ldots,e^{i\frac\pi3 t_{kn}}\bigr)$.
Notice, that the $k$th component of the vector $t_k$ is equal to $1$. Then
$\theta=-e^{i\psi_k}t_k$. Thus, starting with the direction of one
boundary edge only, one can find the directions of all the remaining
boundary edges (and all the other edges also) of the tree $\G$ using just
the twisting numbers.

As an application of the above formula, let us calculate the length of the
network $\G$ and the directions of all its edges without an explicit
construction of the tree and using only the information concerning the
boundary mapping $\v$ and the planar structure of the network $\G$ (the
twisting numbers for all the pairs of edges of the corresponding immersed
planar tree $G'$).  Since
 $$
\r(\G)=\langle z,\theta\rangle =-e^{i\psi_k}\langle z,t_k\rangle ,
 $$
then $\r(\G)=\bigl|\langle z,t_k\rangle \bigr|$ and $\psi_k$ is equal to
the argument of the number $-\langle z,\theta\rangle /\langle z,t_k\rangle
$, and since $\langle z,\theta\rangle $ is a positive real, then $\psi_k$
is equal to the argument of the number $-\langle t_k,z\rangle $. If we
know $\psi_k$, then we can find out the remaining $\psi_p$ by means of the
above formulas.

To calculate similarly the length and the edges directions of a locally
minimal planar network of general form, we need to partition the reduced
network $\hG$ into the binary components, cutting it by the vertices of
degree $2$ and boundary vertices of degree $3$, and to proceed the
calculation for each component separately.

Let us return to the networks in $\R^m$. By $[G,\v]$ we denote the set of
all the networks in $\R^m$ having the type $G$ and the boundary $\v$.
Each network $\G$ from $[G,\v]$ is uniquely defined by the images
$z_k=\G(v_k)$, $k>n$, of all the interior vertices of $G$. Consequently
writing down the vectors $z_k$, $k>n$, as the components of the vector
$(z_{n+1},\ldots,z_{n+s})$, we identify the set $[G,\v]$ with the space
$\R^{ms}$.  Notice, that the length of the network is a real-valued
function $\r_{G,\v}$ on $[G,\v]$. It is easy to see, that this function is
convex and tends to infinity as the arguments unlimitedly increase.
Therefore, the set of minima of this function is non-empty and convex.
Each network corresponding to a minimum of this function is said to be an
{\em extreme network of the type $G$ with the boundary $\v$}.

Recall that if among the extreme networks of a type $G$ with a boundary
$\v$ a locally minimal network exists, then the extreme network in $[G,\v]$
is unique, see~\cite{2}. Thus, $[G,\v]$ contains at most one locally minimal
network. Notice, that an extreme network need not always be locally
minimal. For example, it can contain vertices of degree more than~$3$.

By definition, the length of an extreme network can be calculated as the
least value of the function $\r_{G,\v}$ on $\R^{ms}$. But this function is
a sum of square roots of square polynomials on the coordinates of the
interior vertices, that complicates the investigation of the extreme
networks in its terms. In the present paper we show, how the length of an
extreme network  can be calculated by maximization of a linear function.
But this maximization need to be proceeded on a more complicated subset
of the configuration space, namely, on an intersection of some cylinders
and a linear subspace. We generalize the Maxwell formula and make it
uniform for all types of networks obtained from a given type $G$ by
degeneration of some edges of the graph $G$. Besides, in the case of
planar networks, we escape the necessity to go over all possible
non-equivalent immersions $G'$ of a binary tree $G$.

\section{Generalized Maxwell formula}
 \markright {Generalized Maxwell formula}

Let $G=(V,E)$ be an arbitrary tree with a boundary
$B=\{v_1,\ldots,v_n\}\ss V$ consisting of $n$ elements. Using the
structure of the tree $G$, let us form a system $S_G$ consisting of
equations and inequalities on variables $\theta_k^j$ which are considered
as the standard coordinates
$(\theta_1^1,\ldots,\theta_1^m,\ldots,\theta_n^1,\ldots,\theta_n^m)$ in
the space $\R^{mn}$.  We put $\theta_k=(\theta_k^1,\ldots,\theta_k^m)$ and
$\theta=(\theta_1,\ldots,\theta_n)$.

Let $e$ be some edge of the tree $G$. By $G_r=(V_r,E_r)$, $r=1,\;2$, we
denote the connected components of the graph $G\sm e$.  Thus, $G\sm
e=G_1\sqcup G_2$, and put $B_r=B\cap V_r$. By $\cP_G(e)$ we denote the
resulting partition $\{B_1,B_2\}$ of the set $B$. Let us choose one of
$B_r\in\cP_G(e)$, and let $B_r=\{v_{k_1},\ldots,v_{k_p}\}$. By $\s_e$ we
denote the inequality $\Bigl\|\sum_{q=1}^p\theta_{k_q}\Bigr\|^2\le 1$, and
by $\s$ we denote the vector equation $\sum_{k=1}^n\theta_k=0$. The system
$S_G$ is formed from $\s$ and $\s_e$ over al edges $e$ of the tree $G$.

Let $|S_G|\ss\R^{mn}$ be the set of all solutions to the system $S_G$.
Notice that $|S_G|$ does not depend on the choice of the components $B_k$,
since equality~$\s$ is valid. Besides, each inequality~$\s_e$ defines a
convex subset in $\R^{mn}$, bounded by an elliptic cylinder, i.e. it is
the product of an $m$-dimensional elliptic disk and $\R^{m(n-1)}$. The
origin $0$ together with the points of the subspace $\Pi$ defined by
equation $\s$, which are close to $0$,  are solutions to the system $S_G$.
Therefore, $|S_G|$ is a convex body in the subspace $\Pi$.

Let $\v\:B\to\R^m$ be an arbitrary mapping, and $z_k=\v(v_k)$, and
$z=(z_1,\ldots,z_n)$. Put $\r_\v(\theta)=\langle \theta,z\rangle $.

 \begin {theorem}\label{Theorem1.}
Under the above notations, the length of each extreme network from
$[G,\v]$ is equal to the largest value of the linear function $\r_\v$ on
the convex set $|S_G|$.
 \end {theorem}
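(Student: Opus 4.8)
The strategy is to establish the equality by proving two inequalities: that $\r(\G)\le\max_{|S_G|}\r_\v$ for every extreme network $\G\in[G,\v]$, and that this maximum is actually attained by the length of some (equivalently, every) extreme network. I would first reduce to the non-degenerate reduced network $\hG$ of an extreme $\G$, using the $\G$-reduction construction from Section~1: the interior vertices of degree $\ge3$ of $\hG$ are genuine Steiner points where the length is stationary, so the variational (first-order) conditions hold there, while the degenerate edges of $G$ are exactly those contracted in passing to $G/\a$. Then I would write the length of $\hG$ via the Maxwell-type formula as $\langle z,\theta\rangle$ for the vector $\theta$ built from the directions of the (possibly coincident-vertex) boundary edges of $\hG$, assembled so that at each boundary vertex we take the \emph{sum} of the directions of all edges of $\hG$ entering the corresponding reduced vertex — exactly the generalized Maxwell formula mentioned in the Preliminaries.

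**The key steps.** Step one: show that this $\theta$ lies in $|S_G|$. The equation $\s$, namely $\sum_k\theta_k=0$, follows because summing all boundary-edge direction vectors over the whole tree telescopes through the degree-$3$ interior balance conditions (each interior Steiner point contributes a zero sum) — this is the standard "closed-up" argument. For the cylinder inequalities $\s_e$: given an edge $e$ of $G$ with $G\sm e=G_1\sqcup G_2$ and $B_r=B\cap V_r$, the quantity $\sum_{v_{k_q}\in B_r}\theta_{k_q}$ equals, again by telescoping the interior balance conditions restricted to the subtree on the $B_r$ side, the direction vector carried "across the cut" — and if $e$ is $\G$-non-degenerate this is a single unit vector (norm $1$), while if $e$ is degenerate it is a sum of unit vectors emanating from a single collapsed vertex which, by the angle condition at that vertex of $\hG$ together with the $\le3$ valence, has norm $\le1$. (In general the relevant sub-sum is a convex-type combination forced to have norm at most one by the local minimality / extremality inequalities; this is where one uses that the reduced network has all angles $\ge120^\circ$ and that extreme networks satisfy the first-order optimality inequalities even at high-degree vertices.) Hence $\r(\G)=\langle z,\theta\rangle\le\max_{|S_G|}\r_\v$.

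**The reverse inequality.** For the opposite direction I would take a maximizer $\theta^\ast\in|S_G|$ of the linear functional $\r_\v$ and interpret it as a tension field / system of pulls along the edges of $G$: assign to each edge $e$ the vector obtained from the cut-sum $\sum_{B_r}\theta^\ast_{k_q}$, check that these are consistent (an edge gets a well-defined vector of norm $\le1$), and that at each interior vertex they sum to zero by $\s$. By a standard duality/complementary-slackness argument for the convex length functional $\r_{G,\v}$ — the subdifferential of $\|z_k-z_l\|$ is the closed unit ball, a unit vector when the edge is non-degenerate — a configuration $\G$ is extreme if and only if there exists such a balanced field of edge-vectors in the unit ball; and for that $\G$ one gets $\r(\G)=\sum_e\langle\text{edge vector},\,\text{edge}\rangle=\langle z,\theta^\ast\rangle$ after the same telescoping. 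Equality in the length bound forces, on non-degenerate edges, the edge vector to be the actual unit direction, so $\theta^\ast$ is (a) realizable and (b) optimal, giving $\max_{|S_G|}\r_\v\le\r(\G)$.

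**Main obstacle.** The delicate point is the correspondence in Step one and its converse: precisely matching the abstract cut-sums $\sum_{B_r}\theta_{k_q}$ with geometric edge-vectors, and proving the norm bound $\le1$ on \emph{every} edge uniformly — including degenerate edges inside a collapsed cluster, where several edges of $\hG$ meet at one point and one must argue that the "net pull transmitted across the (collapsed) edge" still has norm at most one. This requires the full force of the extremality conditions (not just binary local minimality), i.e.\ the statement that at any vertex of an extreme network the sum of unit edge-directions vanishes and, when restricted to any one side of an edge, has norm bounded by one; carefully setting up and using these inequalities — essentially the KKT conditions for minimizing the convex, nonsmooth functional $\r_{G,\v}$ — is where the real work lies, together with verifying that the maximizer of $\r_\v$ on $|S_G|$ can always be lifted back to an actual network configuration.
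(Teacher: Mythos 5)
Your skeleton is the paper's: the upper bound comes from reading the cut-sums $\theta(v,e)=\sum_{v_{k_q}\in B_1}\theta_{k_q}$ of an arbitrary $\theta\in|S_G|$ as edge vectors (antisymmetric by $\sigma$, balanced at interior vertices, of norm $\le 1$ by $\sigma_e$) and telescoping $\langle z,\theta\rangle=\sum_{e=v_kv_l}\langle z_k-z_l,\theta(v_k,e)\rangle\le\rho(\Gamma)$, and attainment comes from the first-order (subdifferential) structure of an extreme network, which is exactly the Extreme Networks Local Structure Theorem the paper invokes: vectors $\xi(v,e)$ of norm $\le1$ on degenerate edges, equal to the unit directions on non-degenerate ones, antisymmetric and balanced at interior vertices. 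Two steps of your write-up, however, do not work as stated.

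First, your feasible point is built by the generalized Maxwell formula on the reduced network $\hat\Gamma$: at each boundary vertex you take the sum of the directions of the $\hat\Gamma$-edges entering the corresponding reduced vertex. Theorem~1 allows an arbitrary, not necessarily injective, boundary mapping, so a collapsed cluster may contain several boundary vertices, and then this recipe assigns the same full sum to each of them and fails. Example: the star with interior centre $v_4$ and boundary $v_1,v_2,v_3$, with $\varphi(v_1)=\varphi(v_2)=a\ne b=\varphi(v_3)$; the extreme network has $z_4=a$, and your recipe gives $\theta_1=\theta_2=(a-b)/\|a-b\|$, $\theta_3=-(a-b)/\|a-b\|$, which violates $\sigma$ and yields $\langle z,\theta\rangle\ne\|a-b\|=\rho(\Gamma)$. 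The repair is the paper's construction: define $\xi_k$ on $G$ itself as the sum of $\xi(v_k,e)$ over \emph{all} incident edges, including degenerate ones with their subgradient vectors (so boundary vertices in one cluster share the "pull'' instead of each taking all of it). Relatedly, the norm bound across a degenerate edge cannot be derived from $120^\circ$ angles or valence $\le3$ of $\hat\Gamma$, since an extreme network need not be locally minimal; it is precisely the existence of this subgradient field, which you correctly identify in your final paragraph but which must be the construction itself, not a patch.

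Second, your reverse inequality is circular as written. The asserted equivalence ``$\Gamma$ is extreme iff there exists a balanced field of edge vectors in the unit ball'' is false unless the field is also required to equal the actual unit direction on every non-degenerate edge (the zero field is balanced for any $\Gamma$), and the chain $\rho(\Gamma)=\sum_e\langle\hbox{edge vector},\hbox{edge}\rangle=\langle z,\theta^\ast\rangle$ silently identifies the cut-sum field of the maximizer $\theta^\ast$ with the subgradient field of $\Gamma$, which is essentially the statement being proved. No maximizer, ``realizability'' or complementary slackness is needed here: for \emph{every} $\theta\in|S_G|$ the telescoping plus the Cauchy--Schwarz inequality already gives $\langle z,\theta\rangle\le\sum_{e=v_kv_l}\|z_k-z_l\|=\rho(\Gamma)$ (this is the content of the paper's Lemmas~1--5), and attainment is supplied separately by the field $\xi$ above.
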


 \begin{proof}
Let $\theta\in|S_G|$ be an arbitrary point of the set $|S_G|$.  To each
pair $(v,e)$, where $e\in E$ is the edge of the tree $G$ incident to the
vertex $v\in V$,  we assign the vector $\theta(v,e)\in\R^m$ using the
vector $\theta$ as follows.  Let $\cP_G(e)=\{B_1,B_2\}$, and $v\in B_1$
and $B_1=\{v_{k_1},\ldots,v_{k_p}\}$. Then we put
 $$
\theta(v,e)=\sum_{q=1}^p\theta_{k_q}.
 $$

 \begin {lemma}\label{Lemma1.}
For any edge $e=vw$ of the tree $G$ we have $\theta(v,e)=-\theta(w,e)$.
 \end {lemma}

 \begin{proof}
The statement of Lemma follows immediately from equality~$\s$.
 \end{proof}

\begin{lemma}\label{Lemma2.}
For any boundary vertex $v_k$, $1\le k\le n$, we have
$\theta_k=\sum\limits_{e:v_k\in e}\theta(v_k,e)$.
 \end {lemma}

 \begin{proof}
Let $e_q=v_kw_q$, $q=1,\ldots,p$, be all the edges from $G$, which are
incident to $v_k$. We cut the tree $G$ by the vertex $v_k$, and let
$G_q=(V_q,E_q)$ be the component containing $w_q$. Put $B_q=V_q\cap B$,
then
 $$
B=B_1\sqcup\cdots\sqcup B_p\sqcup\{v_k\},
 $$
hence
 $$
0=\sum_{q=1}^p\theta(w_q,e_q)+\theta_k=-\sum_{q=1}^p\theta(v_k,e_q)+\theta_k,
 $$
where the last equality follows from Lemma~\ref{Lemma1.}, which was to be proved.
 \end{proof}

 \begin{lemma}\label{Lemma3.}
Let $e_j$, $j=1,\,\ldots,\,r$, be all the edges of the tree $G$, which are
incident to its interior vertex $v$. Then the equality
 $$
\sum_{j=1}^r\theta(v,e_j)=0
 $$
holds.
 \end {lemma}

 \begin{proof}
Indeed, let $e_j=vw_j$, then, due to Lemma~\ref{Lemma1.}, we have:
$\theta(v,e_j)=-\theta(w_j,e_j)$. Let $\cP_G(e_j)=\{B_1^j,B_2^j\}$, and
$v\in B_1^j$ for each $j$.  Then $B=\sqcup_j B_2^j$, and therefore,
 $$
\sum_j\theta(v,e_j)=-\sum_j\theta(w_j,e_j)=-\sum_{k=1}^n\theta_k=0,
 $$
which was to be proved.
 \end{proof}

\begin{lemma}\label{Lemma4.}
For each vertex $v$ of the tree $G$ and each its edge $e$ incident to
$v$, we have $\|\theta(v,e)\|\le1$.
 \end {lemma}

 \begin{proof}
Taking into account equality~$\s(\theta)$,  the statement of Lemma is
equivalent to the validity of inequality $\s_e(\theta)$.
 \end{proof}

Let $\G\in[G,\v]$ e an extreme network, and
$\{v_{n+1},\,\ldots,\,v_{n+s}\}$ be the set of all interior vertices of
the tree $G$. For $n+1\le k\le n+s$ we also define $z_k$ as follows:
$z_k=\G(v_k)$.

 \begin{lemma}\label{Lemma5.}
Under the above notations, we have
 $$
\langle z,\theta\rangle =
\sum\limits_{ \substack{(v_k,e)\\v_k\in e}}
\bigl\langle z_k,\theta(v_k,e)\bigr\rangle .
 $$
 \end {lemma}

 \begin{proof}
Let us partition the sum in the right hand part of the equality
into two sums: the first one is over all interior vertices
$v_k$, and the second one is over all the boundary vertices. The first
sum vanishes due to Lemma~\ref{Lemma3.}. In the second sum we group the terms
corresponding to the same vertex and apply Lemma~\ref{Lemma2.}. Lemma is proved.

Thus, due to Lemma~\ref{Lemma5.},
 $$
\langle z,\theta\rangle =
\sum\limits_{\substack{ (v_k,e)\\ v_k\in e }}
\bigl\langle z_k,\theta(v_k,e)\bigr\rangle =
\sum_{e=v_kv_l}\langle z_k-z_l,\theta(v_k,e)\rangle ,
 $$
therefore, due to Lemma~\ref{Lemma4.}, we have
 $$
\langle z,\theta\rangle =
\sum_{e=v_kv_l}\bigl\langle z_k-z_l,\theta(v_k,e)\bigr\rangle \le
\sum_{e=v_kv_l}\|z_k-z_l\|=\r(\G).
 $$
Since, as we remember, $\theta$ is an arbitrary point from $|S_G|$, we
conclude that the maximal value of the function $\r_\v(\theta)$ on $|S_G|$
does not exceed $\r(\G)$.

Now, let us show that this maximal value is reached. For each
$\G$-nondegenerate edge $e=vw$, by $\xi(v,e)$ we denote the unit vector
from $\R^m$ having the same direction as the edge $e$ of the network $\G$
entering the point $\G(v)$ has. Notice that $\xi(v,e)=-\xi(w,e)$. Further,
due to Extreme Networks Local Structure Theorem (see \cite{2}, Theorem~4.1, or
\cite{5}, Theorem~4.1), for any $\G$-degenerate edge $e=vw$ the pair $(v,e)$
can be assigned with some vector $\xi(v,e)\in\R^m$,
$\bigl\|\xi(v,e)\bigr\|\le1$, in such a way, that the equality
$\xi(v,e)=-\xi(w,e)$ holds and also the vector-valued function $\xi$
(now $\xi$ is defined on all the pairs $(v,e)$ where vertex $v$
is incident to edge $e$) meets relation
 \begin{equation}\label{eq}
\sum_{e:v\in e}\xi(v,e)=0
 \end{equation}
at any {\bf interior\/} vertex $v$ of the tree $G$.  We put
$\xi_k=\sum_{e:v_k\in e}\xi(v_k,e)$, $1\le k\le n$ and show that the
vector $\xi=(\xi_1,\ldots,\xi_n)\in\R^{mn}$ is a solution to the system
$S_G$ and $\r_\v(\xi)=\langle z,\xi\rangle =\r(\G)$.

Indeed, consider an arbitrary edge $e=vw$ of the tree $G$, and let $G\sm
e=G_1\sqcup G_2$, $G_r=(V_r,E_r)$, and $v\in V_1$ and $B_1=B\cap
V_1=\{v_{k_1},\ldots,v_{k_p}\}$.
 \end{proof}

 \begin {lemma}\label{Lemma6.}
Under the above notations, we have $\xi(v,e)=\sum_{q=1}^p\xi_{k_q}$.
 \end {lemma}

 \begin{proof}
At first, assume that $v\in B$ and $v=v_{k_s}$. Since the equality
$\xi(v',e')=-\xi(w',e')$ holds for any edge $e'=v'w'$, we conclude that
 \begin{multline*}
  0=\sum_{e'=v'w'\in E_1}\bigl(\xi(v',e')+\xi(w',e')\bigr)=\\ =
  \sum_{q\in\{1,\ldots,p\}\sm\{s\}}\
 	\sum_{e'\in E_1:v_{k_q}\in e'}\xi(v_{k_q},e')+
 	\sum_{e'\in E_1:v\in e'}\xi(v,e')+\\ +
   	\sum_{v'\in V_1\sm B_1}
 	\sum_{e'\in E_1:v'\in e'}\xi(v',e') =\\ =
 	\sum_{q\in\{1,\ldots,p\}\sm\{s\}}\xi_{k_q}+
 	\bigl(\xi_{k_s}-\xi(v,e)\bigr)+
 \sum_{v'\in V_1\sm B_1}\
 \sum_{e'\in E_1:v'\in e'}\xi(v',e').
 \end{multline*}
Since the latter sum vanishes in accordance to equality~(\ref{eq}), we obtain the
statement of Lemma for the boundary vertex $v$.

Now, let $v$ be an interior vertex of the tree $G$. Then
 \begin{multline*}
 0=\sum_{e'=v'w'\in E_1}\bigl(\xi(v',e')+\xi(w',e')\bigr)=\\ =
 \sum_{q=1}^p\
 \sum_{e':v_{k_q}\in e'}\xi(v_{k_q},e')+
 \sum_{e'\in E_1:v\in e'}\xi(v,e')+\\ +
 \sum_{v'\in V_1\sm\bigl(B_1\cup\{v\}\bigr)}\
 \sum_{e'\in E_1:v'\in e'}\xi(v',e').
 \end{multline*}

The first sum in the latter formula coincides with $\sum_{q=1}^p\xi_{k_q}$
due to definitions, the second one is equal to $-\xi(v,e)$ in accordance
with equality~(\ref{eq}), and the third one vanishes due to equality~(\ref{eq}) also,
thus, we get the statement of Lemma for the interior vertex $v$. Lemma is
proved.
 \end{proof}

 \begin {lemma}\label{Lemma7.}
The equality $\sum_{k=1}^n\xi_k=0$ holds.
 \end {lemma}

 \begin{proof}
Again, taking into account the fact that the equality $\xi(v,e)=-\xi(w,e)$
is valid for any edge $e=vw$, we get
 $$
0=\sum_{e=vw\in E}\bigl(\xi(v,e)+\xi(w,e)\bigr)=
 \sum_{k=1}^n\ \sum_{e:v_k\in e}\xi(v_k,e)+
 \sum_{v\in (V\sm B)}\ \sum_{e\in E:v\in e}\xi(v,e).
 $$
The first sum in the latter equality coincides with $\sum_{k=1}^n\xi_k$
due to definitions, and the second one vanishes due to equality~(\ref{eq}). Lemma
is proved.
 \end{proof}

Lemma~\ref{Lemma6.} and the condition $\|\xi(v,e)\|\le1$ imply that $\xi$ meets each
inequality $\s_e$. Lemma~\ref{Lemma7.} implies that $\xi$ also meets condition~$\s$.
Thus, $\xi\in|S_G|$. Further,
 \begin{multline*}
\r_\v(\xi)=\langle z,\xi\rangle =\sum_{k=1}^n\langle z_k,\xi_k\rangle
=\sum_{k=1}^n\sum_{e:v_k\in e}\bigl\langle z_k,\xi(x_k,e)\bigr\rangle=\\
=\sum_{k=1}^{n+s}\sum_{e:v_k\in e}\bigl\langle z_k,\xi(x_k,e)\bigr\rangle
	=\sum_{e=v_kv_l\in E}\bigl\langle z_k-z_l,\xi(v_k,e)\bigr\rangle=\\
	=\sum\limits_{\substack{e=v_kv_l\in E\\ z_k\ne z_l}}	\bigl\langle z_k-z_l,\xi(v_k,e)\bigr\rangle
	=\sum\limits_{\substack{e=v_kv_l\in E\\z_k\ne z_l}}\biggl\langle z_k-z_l,\frac{z_k-z_l}{\|z_k-z_l\|}\biggr\rangle=\\
	=\sum\limits_{\substack{e=v_kv_l\in E\\z_k\ne z_l}}\|z_k-z_l\|=\r(\G).
 \end{multline*}
Theorem~\ref{Theorem1.} is proved.
 \end{proof}

 \begin{remark}
Notice that, due to Lemmas~\ref{Lemma2.} and~\ref{Lemma4.}, each component of vector
$\theta\in|S_G|$ is bounded, therefore $|S_G|$ is a compact subset of
$\R^{mn}$. Thus, taking the above into account, we conclude that $|S_G|$
is a convex compact.
 \end{remark}

As above, let $G=(V,E)$ be a tree with a boundary $B=\{v_1,\ldots,v_n\}\ss
V$ and $E_d\ss E$ be some family of edges of the tree $G$. By $\a=\{G_k\}$
we denote the family of nonintersecting subtree of $G$, such that the
union of all their edges coincides with $E_d$. Assume that each $G_k$
intersects $B$ by at most one vertex. Such families $E_d$ are said to be
{\em admissible}. Consider the reduced tree $G/\a$ and choose its boundary
to be the set of all the vertices which intersect the set $B$. Due to the
restrictions imposed on the subtree $G_k$ from $\a$, the boundary set
obtained consists of the same number of points as the set $B$ does, that
gives us an opportunity to identify it with $B$. Thus, for an arbitrary
mapping $\v\: B\to\R^m$, the next two spaces are defined:  $[G,\v]$ and
$[G/\a,\v]$. If $\G\in[G,\v]$ degenerates all the edges from $E_d$, then
$\G$ generates naturally the network from $[G/\a,\v]$, which we denote by
$\G/\a$.

As above, consider the system $S_G$ consisting of equation~$\s$ and
inequalities~$\s_e$, and through out of it all the inequalities
corresponding to the edges $e\in E_d$.  By $S_G\sm E_d$ we denote the
resulting system.

 \begin {theorem}\label{Theorem2.}
Under the above notations, the length of extreme network
$\G/\a\in[G/\a,\v]$ is equal to the largest value of the linear function
$\r_\v$ on the set $|S_G\sm E_d|$ of all the solutions to the system
$S_G\sm E_d$.
 \end {theorem}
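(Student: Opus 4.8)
The plan is to reduce Theorem~\ref{Theorem2.} to Theorem~\ref{Theorem1.} applied to the reduced tree $G/\a$. The key observation is that the system $S_{G/\a}$ is essentially the same as $S_G\sm E_d$: the edges of $G/\a$ are exactly the edges of $G$ not in $E_d$, and for each such edge $e$ the partition $\cP_{G/\a}(e)$ of the boundary of $G/\a$ corresponds under the identification of $\d(G/\a)$ with $B$ to the partition $\cP_G(e)$ of $B$. Indeed, removing $e$ from $G/\a$ splits it into two components, and since each $G_k\in\a$ is connected and disjoint from the others, each $G_k$ lies entirely in one of the two components; tracing which boundary vertices of $G/\a$ (i.e.\ which vertices of $B$) land in each component reproduces exactly $\{B_1,B_2\}=\cP_G(e)$. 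Hence the inequality $\s_e$ attached to $e$ in $S_{G/\a}$ coincides with the one in $S_G\sm E_d$, and the equation $\s$ (namely $\sum_{k=1}^n\theta_k=0$) is literally the same in both systems since both have boundary identified with $B$. Therefore $|S_G\sm E_d|=|S_{G/\a}|$ as subsets of $\R^{mn}$.

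With that identification in hand, I would argue as follows. First I would check that $\G/\a$ is indeed an extreme network of type $G/\a$ with boundary $\v$: this is where one uses that $\G$ degenerates exactly the edges in $E_d$ and that the length of $\G/\a$ equals the length of $\G$ (degenerate edges contribute zero length), combined with the fact that any network in $[G/\a,\v]$ lifts to a network in $[G,\v]$ of the same length (by subdividing back along the $G_k$, placing all collapsed vertices at the image of the single boundary vertex of $G_k$, or at the common image of the attaching vertex if $G_k$ is purely interior), so a minimizer of $\r_{G/\a,\v}$ corresponds to a minimizer of $\r_{G,\v}$ among networks collapsing $E_d$. Actually the cleanest route is simply: $\G$ is extreme in $[G,\v]$, its length equals the length of $\G/\a$, and $\r_{G/\a,\v}\ge$ the minimum is witnessed, so $\G/\a$ is extreme in $[G/\a,\v]$. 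Then apply Theorem~\ref{Theorem1.} to the tree $G/\a$ and the boundary $\v$: the length of the extreme network $\G/\a$ equals the maximum of $\r_\v$ on $|S_{G/\a}|=|S_G\sm E_d|$, which is exactly the claimed conclusion.

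I expect the main obstacle to be the bookkeeping in the first step, verifying that $\G/\a$ is extreme in $[G/\a,\v]$ rather than merely locally minimal or merely of minimal length among a restricted class. One must be careful because an arbitrary extreme $\G\in[G,\v]$ need not degenerate the edges of $E_d$; the hypothesis of the theorem is precisely that we are handed a $\G$ that does, and that $\a=\{G_k\}$ is the admissible family built from $E_d$. The admissibility condition (each $G_k$ meets $B$ in at most one vertex) is exactly what guarantees that $\d(G/\a)$ has $n$ points and can be identified with $B$, so that $[G/\a,\v]$ and the function $\r_\v$ on $\R^{mn}$ make sense. Once it is clear that collapsing and un-collapsing along $\a$ is a length-preserving bijection between $[G/\a,\v]$ and the subset of $[G,\v]$ consisting of networks that degenerate $E_d$ (with the un-collapsing placing every vertex of $G_k$ at one common point), extremality of $\G/\a$ follows from extremality of $\G$, and the rest is the identification $|S_G\sm E_d|=|S_{G/\a}|$ together with a direct citation of Theorem~\ref{Theorem1.}.
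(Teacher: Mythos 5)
Your proposal is correct and takes essentially the same route as the paper: its entire proof is the observation that $S_G\sm E_d=S_{G/\a}$ because $\cP_{G/\a}(e)=\cP_G(e)$ for every edge $e$ of $G/\a$, followed by applying Theorem~\ref{Theorem1.} to the tree $G/\a$. Your extra verification that $\G/\a$ is extreme in $[G/\a,\v]$ (via the length-preserving collapse/lift correspondence) is sound but is not part of the paper's argument, which treats the extremality of $\G/\a$ as given in the statement.
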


 \begin{proof}
It is sufficient to notice that $S_G\sm E_d=S_{G/\a}$, since for any
edge $e$ from $G/\a$ we have: $\cP_{G/\a}(e)=\cP_{G}(e)$. Theorem~\ref{Theorem2.} is
proved.
 \end{proof}

Thus, throwing out of inequalities of the form $\s_e$ from the system
$S_G$ is equivalent to factorization of the tree $G$ by the corresponding
edges $e$. The authors hope that the formula obtained for the calculation
of extreme networks lengths can be useful in investigation of Steiner
ratio of Euclidean spaces.

 \begin{remark}
The results of this paper could be generalized in the following
directions. First, Extreme Networks Local Structure Theorem from~\cite{2}
and~\cite{5} is proved for the case of extreme weighted networks. The classical
Maxwell formula also can be easily transformed to this case (namely, the
length of the direction vector of a weighted edge should be equal to the
weight of the edge).  Therefore, the above results can be easily
generalized to the case of weighted trees.

Second, the classical Maxwell formula remains valid for extreme networks
with cycles. Therefore it seems not very difficult to generalize
Theorems~\ref{Theorem1.} and~\ref{Theorem2.} to the case of arbitrary extreme networks in $\R^m$ (not
necessary the trees as in the present paper), and also to the case of
arbitrary weighted extreme networks in $\R^m$.

At last, in~\cite{2} we obtained theorems on the local structure of extreme
networks in normed spaces.  It would be interesting to work out an
analogue to the Maxwell formula for this case in terms of so-called
$\r$-impulse, see~\cite{2}, and after that to generalize the results of the
present paper to the case of normed spaces.
  \end{remark}


 \end {document}